\newtheorem{opb}[thm]{Open Problem}
\begin{document}

\title{Combinatorial Substitutions and Sofic Tilings}

\author{Th. Fernique}{Thomas Fernique}
\author{N. Ollinger}{Nicolas Ollinger}
\address{Laboratoire d'informatique fondamentale de Marseille (LIF)
  \newline Aix-Marseille Universit\'e, CNRS,
  \newline 39 rue Joliot-Curie, 13\kern 0.2em 013 Marseille, France}
\email{{Thomas.Fernique,Nicolas.Ollinger}@lif.univ-mrs.fr}

\thanks{Partially supported by the ANR projects EMC (ANR-09-BLAN-0164) and SubTile}

\begin{abstract}
\noindent
A combinatorial substitution is a map over tilings which allows to define sets of tilings with a strong hierarchical structure.
In this paper, we show that such sets of tilings are sofic, that is, can be enforced by finitely many local constraints.
This extends some similar previous results (Mozes'90, Goodman-Strauss'98) in a much shorter presentation.
\end{abstract}

\maketitle

\section{Introduction}

Tiling some space with geometrical shapes, or tiles, consists into covering this space with copies of the tiles. When a set of tilings can be characterized by adding finitely many local constraints on tiles so that the set of tilings corresponds exactly to the set of tilings satisfying the constraints, such a set of tilings is called sofic. Soficity corresponds to the interesting idea that the validity of a tiling can be locally proved by decorating tiles with constraints. In this paper, we contribute to a general question: which sets of tilings are sofic?

\medskip
Substitutions provide a simple way to express how a set of tilings can be obtained by iteratively constructing bigger and bigger aggregates of tiles. The strong hierarchical structure of substitutions tilings permits to enforce global properties, for example aperiodicity. To prove that the set of tilings generated by a substitution is sofic, one has to encode the global hierarchical structure into local constraints on tiles. Such technique is at the root of classical papers on the undecidability of the Domino Problem \cite{RB,RR}. In the case of tilings on the square grid, Mozes~\cite{SM} proved in a seminal paper that the set of tilings generated by rectangular non-deterministic substitutions satisfying a particular property is sofic. Goodman-Strauss~\cite{GS} proved that such a construction method can be extended to a wide variety of geometrical substitutions. In this paper, based on ideas developed in \cite{NO}, we further extend the construction to a broader class of substitutions by replacing the geometrical conditions by combinatorial conditions while decreasing the length of the presentation.

\medskip
Let us sketch some definitions to state our main theorem. A sofic tiling is a valid tiling by a finite set of tiles with decorations. Combinatorial substitutions, introduced by Priebe-Frank in \cite{PF}, map a tiling by tiles onto a tiling by so-called macro-tiles (finite tilings, here assumed to be connected), so that the macro-tiles of the latter are arranged as the tiles of the former. The limit set of a substitution is the set of complete tilings that admits preimages of any depth by the substitution. A good substitution is a substitution that is both connecting (a combinatorial condition ensuring that there is enough room for all information to flow) and consistent (a geometrical condition ensuring that the substitution can be correctly iterated).
The main result obtained in this paper is:

\begin{theorem}\label{th:soficity}
The limit set of a good combinatorial substitution is sofic.
\end{theorem}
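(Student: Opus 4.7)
The plan is to decorate the tiles of the substitution's alphabet with finitely many additional layers of information so that the local matching rules force any valid tiling to carry, simultaneously, a hierarchical parsing into nested macro-tiles of every order. Each decorated tile would carry the underlying tile, a pointer giving its coordinate inside its enclosing first-order macro-tile, and ``channels'' along its boundary by which neighbouring tiles can exchange information about which position they jointly occupy in the next-order macro-tile. Because the substitution has only finitely many macro-tile shapes, the catalogue of legal first-order skeletons is finite, and self-similarity of the substitution lets the same finite alphabet play the role of skeleton at every level, much as in the Robinson-style hierarchical constructions that underpin Mozes and Goodman-Strauss.

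\medskip
The construction then proceeds in three logical steps. First, I would define local rules that force any legally decorated tiling to partition into macro-tiles drawn from the finite catalogue of shapes of the substitution; this amounts to a Wang-like encoding of a fixed finite set of connected macro-tile shapes. Second, I would use the connecting condition to route, across the boundary of each macro-tile, the coordinate of that macro-tile inside the enclosing super-macro-tile: connectedness of macro-tiles gives the existence of a boundary, and the connecting hypothesis is the combinatorial guarantee that enough pairwise disjoint communication corridors can always be installed there, irrespective of how the macro-tiles meet. Third, I would invoke the consistency condition to argue that the level-one parsing extracted from the decorations contracts to a tiling of macro-tiles that again satisfies all the rules, so that the same argument iterates to yield preimages of arbitrary depth.

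\medskip
The two directions of soficity are then unequal in difficulty. The easy one is that any element of the limit set admits, by definition, a tower of preimages, from which the decorations can be read off by routine bookkeeping. The hard one is the converse: any locally valid decorated tiling must project onto a tiling of the limit set. Here one must verify that iterating the contraction produces a well-defined infinite chain of preimages, which is exactly what the consistency condition is designed to ensure. The main obstacle, and the place where the connecting condition earns its keep, is keeping the signal routing inside a \emph{finite} alphabet despite an infinite hierarchy: one must provide scale-invariant corridors for the hierarchical address to propagate without collision between levels. Controlling this cleanly, rather than through an unbounded case analysis on macro-tile shapes, is the delicate point on which I would spend most of the technical work.
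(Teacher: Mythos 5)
Your overall architecture is indeed the paper's --- parsing decorations, signal routing through the network provided by the connecting condition, iteration via consistency, in the Robinson/Mozes/Goodman-Strauss lineage --- but you assess the two inclusions exactly backwards, and the direction you dismiss as ``routine bookkeeping'' contains a genuine gap. Given $P\in\Lambda_\sigma$ with a preimage tower $(P_n)_{n\geq 0}$, the decorations can \emph{not} simply be read off the tower: the facets crossed by the level-$n$ networks carry parent/neighbor indices belonging to level $n+1$, so decorating $P$ according to the entire tower still leaves a residual set of facets whose decoration no finite level determines. By condition (3) of Definition~\ref{def:connecting} this residual set thins out into coarser and coarser grids, and in the limit it is either empty, an infinite star, or a biinfinite branch --- which happens precisely on limit-set tilings with an infinite-order fault, e.g.\ a tiling glued from several infinite-order supertiles along a line, whose hierarchical address lies ``at infinity''. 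The paper must introduce an auxiliary tileset $\tau'$ with an explicit ``undefined'' decoration, pass to the limit, and then patch the residual star or branch with $\tau$-tiles carrying arbitrary but coherent signals. Without some such mechanism, your proof of $\Lambda_\sigma\subseteq\pi(\Lambda_\tau)$ stalls on exactly these fault tilings.

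Conversely, the problem on which you propose to spend most of the technical work --- scale-invariant corridors letting addresses of every level propagate without cross-level collision --- is not solved but \emph{dissolved} by the paper's self-simulation formulation (Definition~\ref{def:self_simulation} and Proposition~\ref{prop:self_simulation}). One constructs routing for a single level only: the network of each macro-tile carries one parent-index and one parent/neighbor pair to each port, so that the boundary decorations of a $\tau$-macro-tile $\mathcal{Q}$ reproduce those of a single $\tau$-tile $\phi(\mathcal{Q})$; level-$k$ routing is then performed automatically by simulated tiles, that is, by level-$(k-1)$ macro-tiles, and the induction in Proposition~\ref{prop:self_simulation} (self-simulation plus consistency yields the whole preimage tower at once) replaces any per-level analysis. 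Your remark that self-similarity lets one finite alphabet serve at every level gestures at this, but your second step still treats the hierarchical address as a multi-level signal to be shepherded through all scales. Finally, two points your sketch leaves unaddressed are actually load-bearing in the paper: condition (4) of Definition~\ref{def:connecting}, which makes matching along a port force matching along the whole macro-facet, and the ``derivation'' of central tiles (Step 5) together with the verification that $\phi(\mathcal{Q})\in\tau$, which needs the neighbor-indices off the network to identify the parent tile (hence the paper's footnoted assumption of at least two non-network facets per macro-facet, or oriented facet decorations).
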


The paper is organized as follows.
Sections \ref{sec:sofic} and \ref{sec:substitution} formally define main notions, in particular sofic tilings and good combinatorial substitutions.
Section \ref{sec:simulation} then presents \emph{self-simulation}, which plays a central role in the constructive proof of Theorem~\ref{th:soficity}, which is given in section \ref{sec:proof}.
Last, section \ref{sec:counting} concludes the paper by discussing an important parameter of this proof.\medskip

A single example illustrates definitions and results throughout the whole paper: the ``Rauzy'' example.
It relies on the theory of \emph{generalized substitutions} introduced in \cite{AI}, a self-contained presentation of which is beyond the scope of this paper.
Let us just mention that \emph{Rauzy tilings} are digitizations of the planes of the Euclidean space with a specific given irrational normal vector.
More details, as well as the results we here implicitly rely on, can be found in \cite{BF,TF}.
We chose this example, not the simplest one, because it is not covered by results in \cite{SM,GS}.

\section{Sofic tilings}\label{sec:sofic}

Polytopes are assumed to be homeomorphic to closed balls of $\mathbb{R}^d$ and to have finitely many faces, with the $(d-1)$-dimensional faces being called \emph{facets}.

\medskip
A \emph{tile} $T$ is a polytope of the Euclidean space $\mathbb{R}^d$.
A \emph{tiling} $Q$ of a \emph{domain} $D\subset\mathbb{R}^d$ is a covering of $D$ by interior-disjoint tiles, with the additional condition that two tiles can intersect (if they do) only along entire faces.
A tiling is said to be \emph{finite} if its domain is bounded.

\medskip
Two tiles are said to be \emph{adjacent} if they intersect along at least one facet, and a tiling is said to be \emph{connected} if any two of its tiles can be connected by a sequence of adjacent tiles.

\medskip
A facet of a tiling is said to be \emph{external} if it is on the boundary of the domain, \emph{internal} otherwise.
We denote by $\partial Q$ the set of external facets of a tiling $Q$.
If this set is empty, then the tiling is said to be \emph{complete}: its domain is the whole $\mathbb{R}^d$.

\medskip
A \emph{decorated tile} $\mathcal{T}$ is a tile with a real map defined on its boundaries, called \emph{decoration}.
Two adjacent decorated tiles are said to \emph{match} if their decorations are equal in any point of their intersecting facets.
A \emph{decorated tiling} $\mathcal{Q}$ is a tiling by decorated tiles which pairwise match (when adjacent).
Decorations are thus local constraints on the way tiles can be arranged in a tiling.

\medskip
A \emph{tileset} $\tau$ is a set of decorated tiles.
It is said to be \emph{finite} if it contains only a finite number of tiles up to direct isometries.
A decorated tiling whose tiles belong to a tileset $\tau$ is called a $\tau$-tiling; the set of $\tau$-tilings is denoted by $\Lambda_\tau$.

\medskip
One says that a tiling \emph{can be seen as} a decorated tiling if both are equal up to decorations.
One denotes by $\pi$ the map which removes the decorations.
One easily checks that any tiling can be seen as a $\tau$-tiling if $\tau$ can be infinite.
The interesting case is the one of finite tilesets:

\begin{definition}\label{def:sofic}
A set of tilings is said to be \emph{sofic} if it can be seen as the set of $\tau$-tilings of some finite tileset $\tau$.
\end{definition}

\noindent For example, one can wonder whether the \emph{Rauzy tilings} are sofic (Fig.~\ref{fig:rauzy_tiling_a}).

\medskip
\begin{figure}[hbtp]
\centering
\includegraphics[width=\textwidth]{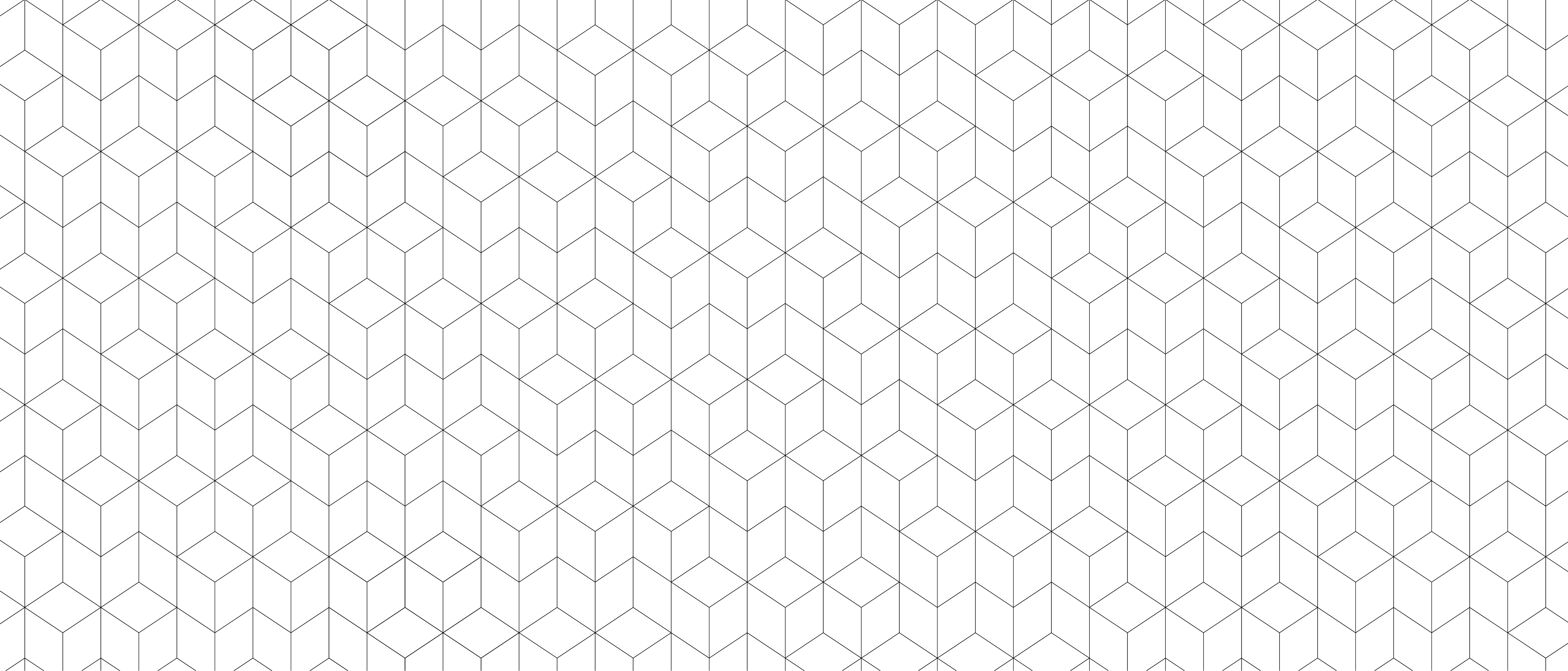}
\caption{A Rauzy tiling (partial view). Are Rauzy tilings sofic?}
\label{fig:rauzy_tiling_a}
\end{figure}

\section{Combinatorial substitutions}\label{sec:substitution}

\begin{definition}\label{def:substitution_rule}
A \emph{combinatorial substitution} is a finite set of \emph{rules} $(P,Q,\gamma)$, where $P$ is a tile, $Q$ is a finite connected tiling, and $\gamma:\partial P\to\partial Q$ maps distinct facets on disjoint sets of facets.
The tiling $Q$ is called a \emph{macro-tile}, and if $f$ is the $k$-th facet of $P$, then $\gamma(f)$ is called the $k$-th \emph{macro-facet} of $Q$.
\end{definition}

\noindent Fig.~\ref{fig:rauzy_rules_a} illustrates this definition.

\medskip
\begin{figure}[hbtp]
\centering
\includegraphics[width=\textwidth]{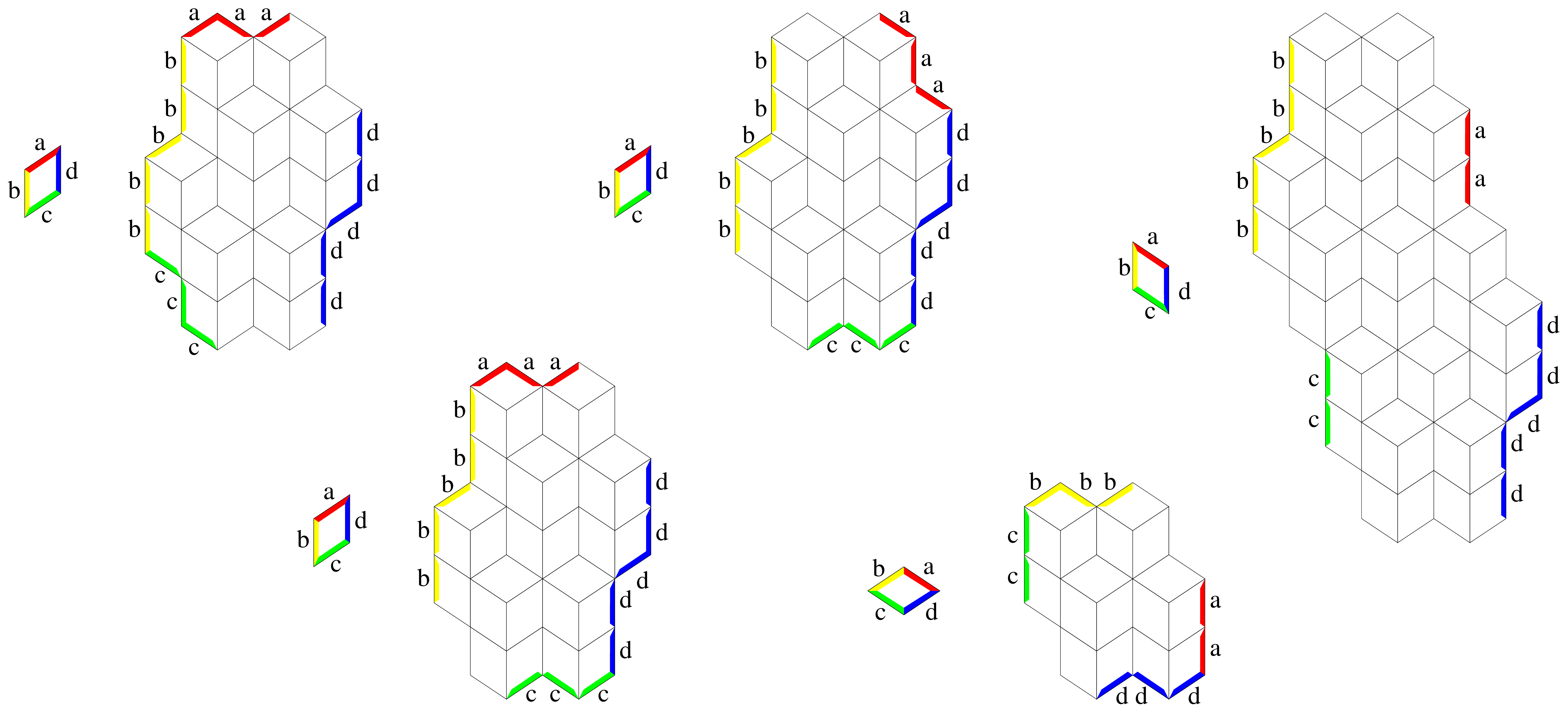}
\caption{These five rules define the so-called Rauzy combinatorial substitution (a facet $f$ and the corresponding macro-facet $\gamma(f)$ are similarly marked).}
\label{fig:rauzy_rules_a}
\end{figure}

We call \emph{tiling by macro-tiles} a tiling whose tiles can be partitioned into macro-tiles, with each macro-facet belonging to the intersection of exactly two macro-tiles.
We can associate with each combinatorial substitution a binary relation over tilings:

\begin{definition}\label{def:preimage}
Let $\sigma$ be a combinatorial substitution.
A tiling $T$ by tiles of $\sigma$ and a tiling $T'$ by macro-tiles of $\sigma$ are said to be \emph{$\sigma$-related} if there is a one-to-one correspondence between the tiles of $T$ and the macro-tiles of $T'$ which preserves the combinatorial structure, that is, such that the $a$-th facet of a first tile of $T$ matches the $b$-th facet of a second tile of $T$ if and only if the $a$-th macro-facet of the first corresponding macro-tile of $T'$ matches the $b$-th macro-facet of the second corresponding macro-tile of $T'$.
One calls $T$ a \emph{preimage} of $T'$ and $T'$ an \emph{image} of $T$.
\end{definition}

\medskip
For example, a Rauzy tiling can be uniquely seen as a tiling by Rauzy macro-tiles (Fig.~\ref{fig:rauzy_tiling_b}) and has a unique preimage, which turns out to be itself a Rauzy tiling (both facts are non-trivial; they follow from results proven in \cite{BF}).

\medskip
\begin{figure}[hbtp]
\centering
\includegraphics[width=\textwidth]{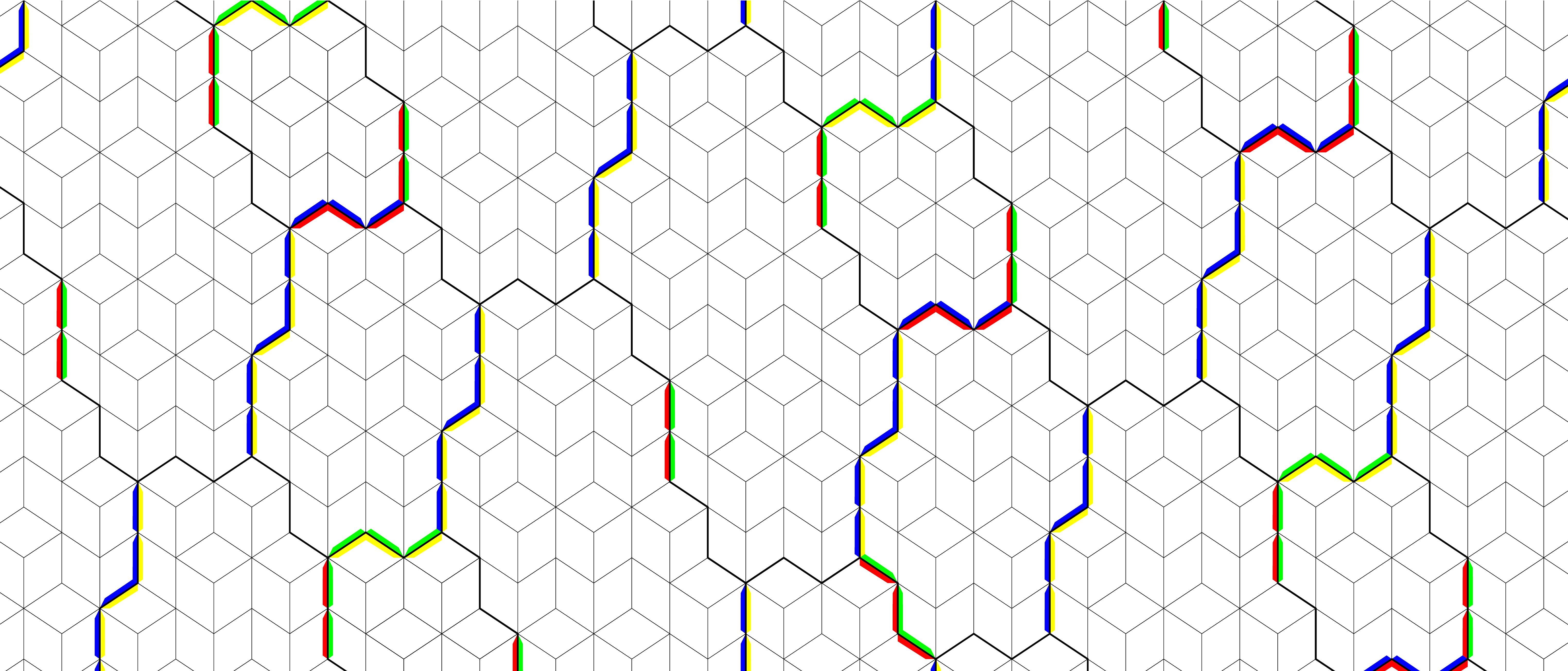}
\caption{A Rauzy tiling can be uniquely seen as a tiling by Rauzy macro-tiles.}
\label{fig:rauzy_tiling_b}
\end{figure}

In particular, the relations associated with combinatorial substitutions yield a strong hierarchical structure on so-called limit-sets:

\begin{definition}\label{def:limit_set}
The \emph{limit set} of a combinatorial substitution $\sigma$, denoted by $\Lambda_\sigma$, is the set of complete tilings which admit an infinite sequence of preimages.
\end{definition}

For example, the limit set of the Rauzy combinatorial substitution is exactly the set of Rauzy tilings (again, this non-trivial fact follows from results proven in \cite{BF}).

\medskip
Let us now turn to the \emph{good combinatorial substitutions} to which Theorem~\ref{th:soficity} applies.
First, a good combinatorial substitution must be \emph{connecting}:

\begin{definition}\label{def:connecting}
A combinatorial substitution $\sigma$ is \emph{connecting} if, for each rule $(P,Q,\gamma)$, the dual graph\footnote{The dual graph of a tiling is the graph whose vertices correspond to tiles of the tiling and whose edges connect vertices corresponding to adjacent tiles.} of $Q$ has a subgraph $N$, called its \emph{network}, such that
\begin{enumerate}
\item $N$ is a star with one branch for each macro-facet, and the leaf of the $k$-th branch is a tile with a facet, called $k$-th \emph{port}, in the $k$-th macro-facet of $Q$;
\item Each macro-facet has non-port facets, and removing the edges of $N$ and its central vertex yields a connected graph which connects\footnote{One says that a subgraph \emph{connects} a set of facets if these facets all belong to tiles which correspond to vertices of this subgraph.} all these facets;
\item the center of $N$ corresponds to a tile in the interior of $Q$, called \emph{central tile};
\item whenever two macro-tiles match along a port, they also match along the corresponding macro-facet.
\end{enumerate}
\end{definition}

Informally, the two first conditions ensure that the macro-facets are big enough to transfer via the network all the informations (encoded by decorations) that we need to enforce the hierarchical structure of the limit set.
In particular, one easily sees that connectivity could not be achieved for combinatorial substitutions on the real line: Theorem \ref{th:soficity} can apply only in dimension two or more.
The third condition ensures that, by iteratively considering macro-tiles of macro-tiles (that is, when going higher and higher in the hierarchy of the limit set), we get tilings covering arbitrarily big balls.
The last condition associates a port with its macro-facet (it is equivalent to the ``sibling-edge-to-edge'' condition of \cite{GS}).

\medskip
\noindent Second, a good combinatorial substitution must be \emph{consistent}:

\begin{definition}\label{def:consistency}
A combinatorial substitution $\sigma$ is said to be \emph{consistent} if any tiling by macro-tiles of $\sigma$ admits a preimage under $\sigma$.
\end{definition}

Intuitively, consistency ensures that, if a tiling meets all the combinatorial conditions to have a preimage, then there is no geometrical obstruction to the existence of such a preimage.

\begin{opb}
Characterize consistent combinatorial substitutions.
\end{opb}

For example, the Rauzy combinatorial substitution is connecting (one easily finds a suitable network for each rule, see Fig.~\ref{fig:rauzy_rules_b} and \ref{fig:rauzy_itere}) and consistent (this non-trivial fact follows from results in \cite{TF}, where explicit maps defined over tilings are shown to be equivalent to such combinatorial substitutions).
Theorem~\ref{th:soficity} thus yields that its limit set, \emph{i.e.}, the set of Rauzy tilings, is sofic.

\begin{figure}[hbtp]
\centering
\includegraphics[width=\textwidth]{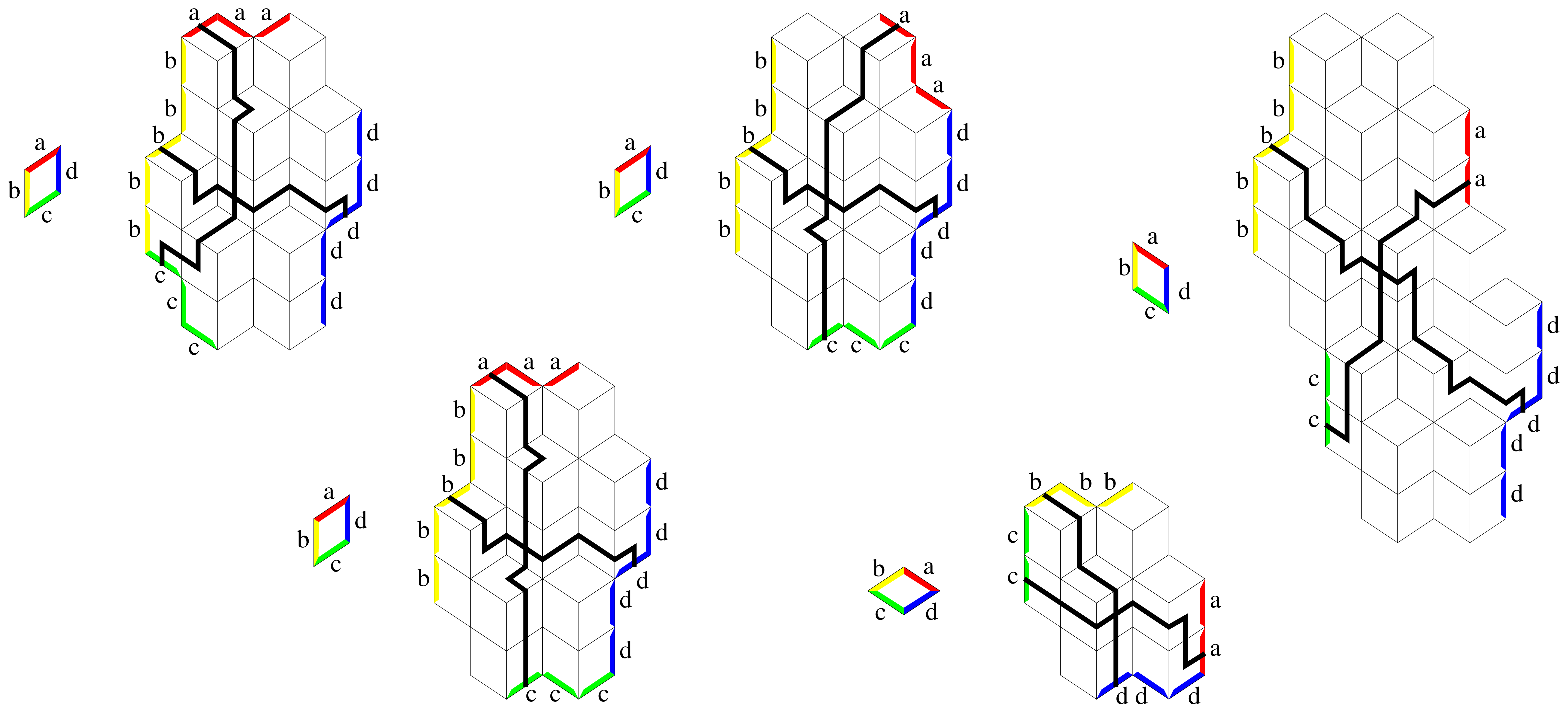}
\caption{The Rauzy combinatorial substitution is connecting.}
\label{fig:rauzy_rules_b}
\end{figure}

\begin{figure}[hbtp]
\centering
\includegraphics[width=0.96\textwidth]{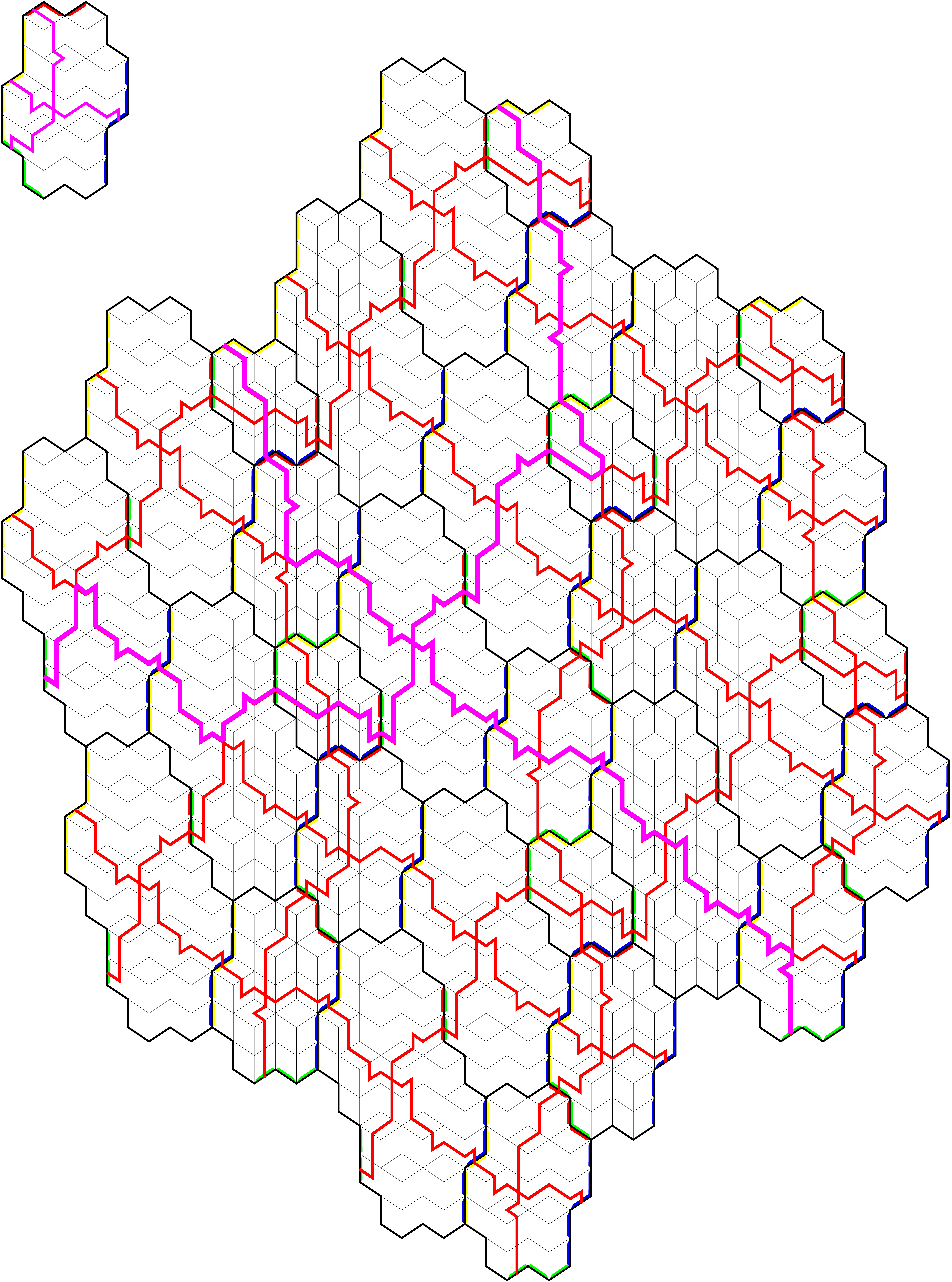}
\caption{A Rauzy macro-tile (top-left) and an image of it under the Rauzy combinatorial substitution, that one could call Rauzy ``macro-macro-tile'' (center).}
\label{fig:rauzy_itere}
\end{figure}

\section{Self-simulation}\label{sec:simulation}

\begin{definition}\label{def:self_simulation}
Let $\sigma$ be a combinatorial substitution with the rules $\{(P_i,Q_i,\gamma_i)\}_i$.
A tileset $\tau$ is said to \emph{$\sigma$-self-simulates} if there is a set of $\tau$-tilings, called $\tau$-macro-tiles, and a map $\phi$ from these $\tau$-macro-tiles into $\tau$ such that
\begin{enumerate}
\item for any $\tau$-macro-tile $\mathcal{Q}$, there is $i$ such that $\pi(\mathcal{Q})=Q_i$ and $\pi(\phi(\mathcal{Q}))=P_i$;
\item any complete $\tau$-tiling is also a tiling by $\tau$-macro-tiles;
\item the $a$-th macro-facet of a $\tau$-macro-tile $\mathcal{Q}$ can match the $b$-th macro-facet of a $\tau$-macro-tile $\mathcal{Q}'$ if and only if the $a$-th facet of the $\tau$-tile $\phi(\mathcal{Q})$ can match the $b$-th facet of the $\tau$-tile $\phi(\mathcal{Q}')$.
\end{enumerate}
\end{definition}

\begin{proposition}\label{prop:self_simulation}
If a tileset $\sigma$-self-simulates for a consistent combinatorial substitution $\sigma$, then its complete tilings are, up to decorations, in the limit set of $\sigma$.
\end{proposition}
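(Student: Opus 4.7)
The plan is to take a complete $\tau$-tiling $\mathcal{T}$ and construct, by repeatedly applying the self-simulation, an infinite sequence of $\sigma$-preimages of $\pi(\mathcal{T})$, thereby certifying that $\pi(\mathcal{T})\in\Lambda_\sigma$.

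The first step extracts one preimage. By condition~(2) of Definition~\ref{def:self_simulation}, $\mathcal{T}$ is a tiling by $\tau$-macro-tiles, and by condition~(1) each such $\tau$-macro-tile $\mathcal{Q}$ satisfies $\pi(\mathcal{Q})=Q_i$ for some rule index~$i$; hence $\pi(\mathcal{T})$ is a tiling by $\sigma$-macro-tiles. Consistency of $\sigma$ then supplies a $\sigma$-preimage $T_1$ of $\pi(\mathcal{T})$, together with a bijection between the tiles of $T_1$ and the macro-tiles of $\pi(\mathcal{T})$ which preserves the combinatorial structure (Definition~\ref{def:preimage}).

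I would then lift $T_1$ to a decorated tiling $\mathcal{T}_1$ via $\phi$: to the tile of $T_1$ corresponding to a macro-tile $\mathcal{Q}$, assign the decoration of $\phi(\mathcal{Q})$; this is legitimate because both tiles have underlying shape $P_i$ by condition~(1). To see that $\mathcal{T}_1$ is a valid $\tau$-tiling, suppose two of its tiles meet along their $a$-th and $b$-th facets; the preimage correspondence forces the associated macro-tiles $\mathcal{Q},\mathcal{Q}'$ to meet in $\mathcal{T}$ along their $a$-th and $b$-th macro-facets, and since $\mathcal{T}$ is itself a $\tau$-tiling they match there. Condition~(3) then transports this matching to the $a$-th facet of $\phi(\mathcal{Q})$ and the $b$-th facet of $\phi(\mathcal{Q}')$, which is exactly what is required. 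Completeness of $\mathcal{T}_1$ is automatic: an external facet of $T_1$ would, by the same correspondence, induce an external macro-facet of $\pi(\mathcal{T})$, but the latter has none.

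Finally, $\mathcal{T}_1$ is itself a complete $\tau$-tiling, so the construction can be iterated to produce complete $\tau$-tilings $\mathcal{T}_n$ (with $\mathcal{T}_0=\mathcal{T}$) such that $\pi(\mathcal{T}_{n+1})$ is a $\sigma$-preimage of $\pi(\mathcal{T}_n)$ for every $n\geq 0$. Composing the preimage relations yields the infinite sequence of preimages required by Definition~\ref{def:limit_set}, so $\pi(\mathcal{T})\in\Lambda_\sigma$. The delicate step is the decoration lifting: one must verify that the three clauses of Definition~\ref{def:self_simulation} mesh correctly with the facet-to-macro-facet correspondence of Definition~\ref{def:preimage} so that the locally prescribed decoration is globally consistent; once this is checked, the remainder is a routine induction on~$n$.
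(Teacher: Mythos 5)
Your proposal is correct and takes essentially the same route as the paper's proof: use conditions (1)--(2) to see $\pi(\mathcal{T})$ as a tiling by macro-tiles, invoke consistency to get a preimage, lift it to a $\tau$-tiling by assigning $\phi(\mathcal{Q})$ to each tile with condition (3) guaranteeing the matchings, and iterate by induction. You in fact spell out the adjacency/completeness verifications slightly more explicitly than the paper does, but the argument is the same.
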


\begin{proof}
Consider a complete $\tau$-tiling $\mathcal{P}$.
Conditions (1)--(2) ensure that removing the decorations of $\mathcal{P}$ yields a tiling by macro-tiles of $\sigma$, say $P$.
The consistency of $\sigma$ ensures that $P$ admits a preimage under $\sigma$, say $R$.
Let us show that $R$ can be endowed by decorations to get a $\tau$-tiling.
Consider a tile $T$ of $R$.
This tile corresponds (via the one-to-one correspondence in Def.~\ref{def:preimage}) to a macro-tile $Q$ of $P$, which is itself the image under $\pi$ of a $\tau$-macro-tile $\mathcal{Q}$ of $\mathcal{P}$.
We associate with $T$ the $\tau$-tile $\phi(\mathcal{Q})$.
Condition (3) ensures that replacing tiles in $R$ by their such associated $\tau$-tiles yields a complete $\tau$-tiling, say $\mathcal{R}$.
We can repeat all this process, with $\mathcal{R}$ instead of $\mathcal{P}$.
By induction, we get an infinite sequence of complete tilings, with each one being the preimage under $\sigma$ of the previous one.
Thus, $\mathcal{P}\in\Lambda_\sigma$.
\end{proof}

\section{Constructive proof of Theorem~\ref{th:soficity}}\label{sec:proof}

Let $\sigma$ be a good combinatorial substitution with rules ${(P_i,Q_i,\gamma_i)}_i$.
We here rely on the fact that $\sigma$ is connecting to construct a finite tileset $\tau$ which $\sigma$-self-simulates.
The consistency of $\sigma$ then ensures, via Prop.~\ref{prop:self_simulation}, that $\pi(\Lambda_\tau)\subseteq\Lambda_\sigma$ holds.
We also show that the converse inclusion holds.
This thus constructively proves Theorem~\ref{th:soficity}.

\subsection{Settings}
Let $T_1,\ldots,T_n$ and $f_1,\ldots,f_m$ be numberings of, respectively, the tiles and the internal facets of all the $Q_i$'s.
Given the $k$-th facet of the tile $T_i$, $N_\sigma(i,k)$ stands either for its index if it is an internal facet, or for a special value ``port'', ``macro-facet'' or ``boundary'' otherwise (depending whether it is a port, a non-port facet in a macro-facet or another external facet).

\medskip
Each tile of $\tau$ is a $T_i$ endowed with a decoration which encodes on each facet a triple $(f,j,g)$, where $f$ and $g$ are either facet indices or special values ``port'', ``macro-facet'' or ``boundary'', and $j$ is either zero or a tile index.
We call $f$ the \emph{macro-index}, $j$ the \emph{parent-index} and $g$ the \emph{neighbor-index}.
This clearly allows only a finite number of different tiles.

\medskip
We skip the technical details concerning the way these triples are encoded by decorations\footnote{Recall that the decoration of a tile is a real map defined on its boundary.}.
We assume that two decorated tiles match along a facet if and only if the same triple is encoded on both facets, and that the only direct isometry which leaves invariant the decoration of a facet is the identity, so that a decorated tile cannot trivially match with a translated or rotated copy of itself.

\subsection{Decorations}
\noindent The five following steps completely define a tileset $\tau$.

\medskip
\noindent{\bf 1.}
The macro-index of the $k$-th facet of any decorated $T_j$ is $N_\sigma(j,k)$.
This step ensures that any complete $\tau$-tiling can be uniquely seen as a tiling by $\tau$-macro-tiles.

\medskip
\noindent{\bf 2.}
Consider a decorated non-central tile of $Q_i$.
Its facets which are internal and not crossed by the network have all the same parent-index, also called parent-index of the tile, which can be any $j$ such that $T_j=P_i$.
Its facets which are external, port excluded, have parent-index $0$.
This step ensures that the $\tau$-tiles of a $\tau$-macro-tile $\mathcal{Q}$ share a common parent-index $j$; the tile $T_j$ is called the \emph{parent-tile} of $\mathcal{Q}$.

\medskip
\noindent{\bf 3.}
Consider, in a non-central $\tau$-tile with parent-index $j$, a facet which is neither a port nor crossed by the network.
Its neighbor-index is either $N_\sigma(j,k)$ if it is in the $k$-th macro-facet, or equal to its macro-index otherwise.
This step ensures that the macro-facets (ports excepted) of a $\tau$-macro-tile are equivalent to the macro-indices of its parent tile (once decorated).

\medskip
\noindent{\bf 4.}
Consider a non-central $\tau$-tile with parent-index $j$.
Its facets which are either $k$-th port or crossed by the $k$-th branch of the network have all the same pair of parent/neighbor indices: it can be any pair not forbidden on the $k$-th facet of $T_j$ (Steps 1--3).
This step ensures that each port of a $\tau$-macro-tile is equivalent to the pair of parent/neighbor indices of a decorated parent-tile\footnote{With the problem that a parent-tile can be decorated in different ways, and nothing yet prevents the ports from mixing these decorations.}.

\medskip
\noindent{\bf 5.}
Whenever a non-central $\tau$-tile $\mathcal{T}$ has as many facets as a central tile $T_j$, we define a central $\tau$-tile $\mathcal{T}'$ by endowing each $k$-th facet of $T_j$ with the pair of parent/neighbor indices on the $k$-th facet of $\mathcal{T}$ (the macro-indices are defined as usual, see Step~1).
One says that $\mathcal{T}'$ \emph{derives} from $\mathcal{T}$.

\subsection{First inclusion}
Let us show that the above defined tileset $\sigma$-self-simulates.
Given a $\tau$-macro-tile $\mathcal{Q}$ with parent-index $j$ and central $\tau$-tile $\mathcal{T}'$, let $\phi(\mathcal{Q})$ be the decorated tile obtained by endowing the $k$-th facet of $T_j$ with the parent/neighbor indices on the $k$-th facet of $\mathcal{T}'$ (the macro-indices are defined as for any tile, see Step 1).
One checks that $\phi$ is a map satisfying conditions (1)--(3) of Def. \ref{def:self_simulation}.
It remains to check that $\phi(\mathcal{Q})\in\tau$.

\medskip
Let $\mathcal{T}$ be a non-central $\tau$-tile from which derives $\mathcal{T}'$.
If $T_j$ is central, then $\phi(\mathcal{Q})$ also derives from $\mathcal{T}$, hence is in $\tau$.
Otherwise, Step 4 ensures, for each $k$, that the parent/neighbor indices on the $k$-th facet of $\mathcal{T}'$ appear on the $k$-th facet of a decorated $T_j$.
In particular, this holds on facets of $T_j$ which are not crossed by a network.
Since the neighbor-indices of these facets can only\footnote{Actually, a facet-index appears on the two tiles of a macro-tile which share the corresponding facet.
We thus need, in order to completely characterize $T_j$, either to assume that there is at least two such facets (this is a rather mild assumption), or to endow facets with an \emph{orientation} and to allow two facets to match if and only if they have opposite orientations.} appear on a $T_j$ (see Steps~1 and 3), $\mathcal{T}$ is a decorated $T_j$.
This yields $\phi(\mathcal{Q})=\mathcal{T}$, and thus $\phi(\mathcal{Q})$ is in $\tau$.

\medskip
\noindent Prop.~\ref{prop:self_simulation} then applies and yields the first inclusion $\pi(\Lambda_\tau)\subseteq\Lambda_\sigma$.

\subsection{Second inclusion}
Let us extend $\tau$ in a tileset $\tau'$ as follow.
For each $\tau$-tile $\mathcal{T}$ and each subset $S$ of its facets, we define a $\tau'$-tile $\mathcal{T}_S$ by replacing the decorations of the facets in $S$ by a special decoration ``undefined''.

\medskip
Now, let $P$ be a tiling in $\Lambda_\sigma$.
Consider an infinite sequence $(P_n)_{n\geq 0}$ of successive preimages of $P$.
Given $n>0$, $P_n$ can be seen as a $\tau'$-tiling: it suffices to endow any facet with ``undefined''.
Then, $P_{n-1}$ can be seen as a tiling by $\tau'$-macro-tiles, with ``undefined'' decorations appearing only on the network.
Indeed, consider a macro-tile of $P_{n-1}$ which corresponds (via the one-to-one correspondence in Def.~\ref{def:preimage}) to a tile $T_j$ in $P_n$: it suffices to endow its tiles as in steps~1--3 of the definition of $\tau$, with $T_j$ being the parent-tile, and with the decoration ``undefined'' on the facets crossed by the network.
This can be iterated up to $P=P_0$, and the third condition of Def.~\ref{def:connecting} ensures that the decorations ``undefined'' appear only on sort of grids whose cells have bigger and bigger size.

\medskip
Thus, by making $n$ tend to infinity, one can see $P$ as a $\tau'$-tiling whose ``undefined'' decorations, if any, form either a star with $k$ infinite branches, or a single biinfinite branch.
In the first case, we can replace the central $\tau'$-tile by any $\tau$-tile with $k$ facets, and then  the tiles on branches by $\tau$-tiles which carry decorations to infinity.
In the second case, we can replace the $\tau'$-tiles by $\tau$-tiles which carry any decoration on the whole branch.
In any case, we can thus see $P$ as a $\tau$-tiling.

\medskip
\noindent We thus have the second inclusion $\Lambda_\sigma\subseteq\pi(\Lambda_\tau)$.
Both inclusions prove Theorem~\ref{th:soficity}.

\section{On the number of tiles}
\label{sec:counting}

Let us conclude this paper by discussing the size $\#\tau$ of the tileset $\tau$ defined in the previous section (although its finiteness suffices for Theorem~\ref{th:soficity}).

\medskip
Consider a good combinatorial substitution with $r$ rules.
Fix a network as in Def.~\ref{def:connecting} for each of these rules.
Let $n$ and $m$ denote the total number of, respectively, tiles and internal facets of the macro-tiles of these $r$ rules.
Among these $n$ tiles, let $p$ denote the number of those which lie on a network.

\medskip
First, the $n-p$ tiles not on the network can be decorated in at most $n$ different ways, according to the parent-index they carry.
This yields at most $N_0=(n-p)n$ $\tau$-tiles.
Then, each of the $p-r$ non-central tiles on the network can be decorated in at most $(n-p)n+pnm$ different ways: $n-p$ parent-indices which correspond to tiles not on the network, hence allow at most $n$ pairs of parent/neighbor indices carried on the network, and $p$ parent-indices which correspond to tiles on the network, hence allow at most $nm$ pairs of parent/neighbor indices carried on the network.
This yields at most $N_p=(p-r)((n-p)n+pmn)$ $\tau$-tiles.
Last, the $r$ central tiles can be decorated in at most as many different ways as there is non-central $\tau$-tiles.
Finally, this yields
$$
\#\tau\leq (r+1)(N_0+N_p)\leq (r+2)p^2mn.
$$
This bound is, for example, about one billion in the Rauzy case.

\medskip
\begin{figure}[hbtp]
\centering
\includegraphics[width=\textwidth]{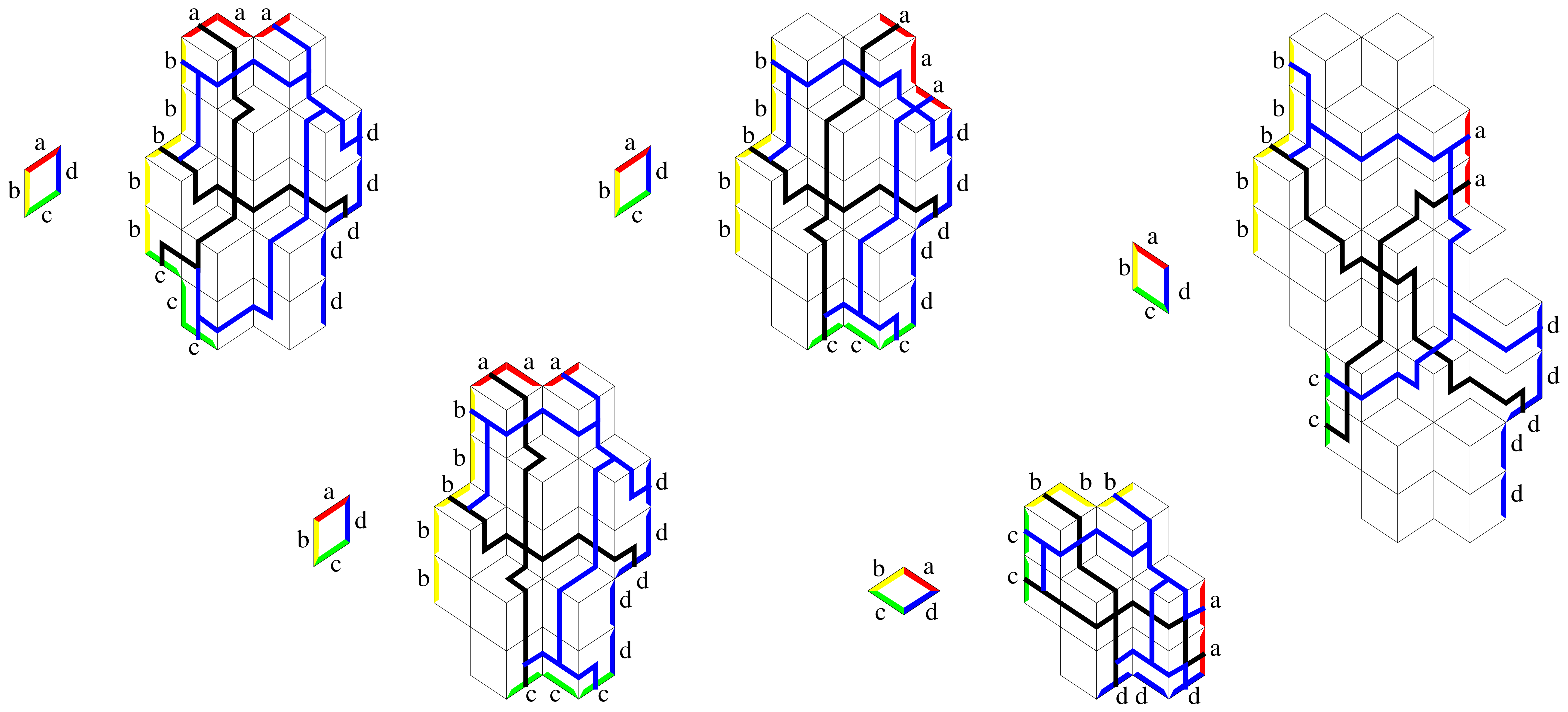}
\caption{A second network for the Rauzy combinatorial substitution.}
\label{fig:rauzy_rules_c}
\end{figure}

\medskip
In order to reduce this huge number of tiles, it is worth noting that, instead of carrying a parent-index through all the tiles of a macro-tile, it suffices to carry it along a \emph{second network} connecting the macro-facets of this macro-tile and intersecting each of the branches of its (first) network (see, \emph{e.g.}, Fig. \ref{fig:rauzy_rules_c}).
The ``control'' described in Step~4 is then performed only on tiles where both networks crosses, while we simply allow any possible pairs of parent/neighbor indices to be carried through the tiles which are only on the first network.
If we denote by $q$ the total number of tiles on these new second networks and by $c$ be the total number of crossings between second and first networks, a similar analysis yields
$$
\#\tau \leq (r+1)(N'_0+N'_q+N'_p+N'_c) \leq (r+2)cp(m+qn),
$$
where:\\
\begin{minipage}{0.5\textwidth}
\begin{eqnarray*}
N'_0&=&n-p-q+c,\\
N'_q&=&(q-c)n,\\
\end{eqnarray*}
\end{minipage}
\begin{minipage}{0.5\textwidth}
\begin{eqnarray*}
N'_p&=&(p-c)(m+qn),\\
N'_c&=&c(N'_0+N'_q+N'_p).\\
\end{eqnarray*}
\end{minipage}
This bound is, for example, about $70$ millions tiles in the Rauzy case.

\medskip
This last bound is huge but generic.
One can hope to dramatically decrease this bound in specific cases.
Indeed, most of the $\tau$-tiles correspond to tiles which simply carry any possible information (the tiles on networks, crossings excepted).
Since these tiles all play the same role, it would be worth to replace all the tiles on a network by a single tile (one can thus hope to gain a factor $pq$ in the above bound -- this would yield about $25000$ tiles in the Rauzy case).
This shall however be done carefully, so that tiles still necessarily form macro-tiles.
Note that it should be much easier in dimension $d\geq 3$, since the cohesion of macro-tiles can be more easily enforced without relying on tiles on networks.

\bigskip
\textbf{Acknowledgments}. We would like to thank the anonymous referees for their valuable comments. The first author also would like to thank N. Pytheas Fogg and C. Goodman-Strauss for their encouragements to write down this result, as well as for useful discussions.

\providecommand{\bysame}{\leavevmode\hbox to3em{\hrulefill}\thinspace}
\providecommand{\MR}{\relax\ifhmode\unskip\space\fi MR }
\providecommand{\MRhref}[2]{%
  \href{http://www.ams.org/mathscinet-getitem?mr=#1}{#2}
}
\providecommand{\href}[2]{#2}

\clearpage\appendix
\section{Complete example: the $3x3$ square substitution}

\tikzset{coolgrid/.style={dash pattern=on 0.2cm off 0.6cm on 0.2cm off 0cm, line cap=rect}}
\colorlet{ftile}{yellow!10}
\colorlet{dual}{magenta!80!black}
\colorlet{ndual}{green!80!black}
\newcommand\cog[1]{\color{cyan!80!black}\textsf{#1}}
\newcommand\coj[1]{\color{red!80!black}\textsf{#1}}
\newcommand\cof[1]{\color{green!80!black}\textsf{#1}}
\newenvironment{blingbling}{\mbox\bgroup}{\egroup}
\pgfkeys{
/fjg/.cd,
f/.store in = \daf,
j/.store in = \daj,
g/.store in = \dag
}
\newcommand\clearfjg{\gdef\daf{\color{black!70}?}\gdef\daj{\color{black!70}?}\gdef\dag{\color{black!70}?}}
\newcommand*\setfjg{\pgfqkeys{/fjg}}
\newcommand\decotile[6]{\begin{scope}[shift={(#6)}]
\begin{scope}
\filldraw[fill=ftile] (0,0) rectangle ++(3,3);
\clip (0,0) rectangle ++(3,3);
\draw (0,0) -- ++(1,1) -- ++(0,1) -- ++(-1,1) -- cycle;
\draw (0,0) -- ++(1,1) -- ++(1,0) -- ++(1,-1) -- cycle;
\draw (3,3) -- ++(-1,-1) -- ++(0,-1) -- ++(1,-1) -- cycle;
\draw (3,3) -- ++(-1,-1) -- ++(-1,0) -- ++(-1,1) -- cycle;
\end{scope}
\clearfjg\setfjg{#1}%
\path (0.8,0) node [above] {\cof{\daf}}
	  ++(0.7,0) node [above] {\coj{\daj}}
	  ++(0.7,0) node [above] {\cog{\dag}};
\clearfjg\setfjg{#2}%
\path (0.8,3) node [below] {\vphantom{$f$}\cof{\daf}}
	  ++(0.7,0) node [below] {\vphantom{$f$}\coj{\daj}}
	  ++(0.7,0) node [below] {\vphantom{$f$}\cog{\dag}};
\clearfjg\setfjg{#3}%
\path (0,0.8) node [right] {\cog{\dag}}
	  ++(0,0.7) node [right] {\coj{\daj}}
	  ++(0,0.7) node [right] {\cof{\daf}};
\clearfjg\setfjg{#4}%
\path (3,0.8) node [left] {\cog{\dag}}
	  ++(0,0.7) node [left] {\coj{\daj}}
	  ++(0,0.7) node [left] {\cof{\daf}};
\path (1.5,1.5) node {\large #5};
\end{scope}}

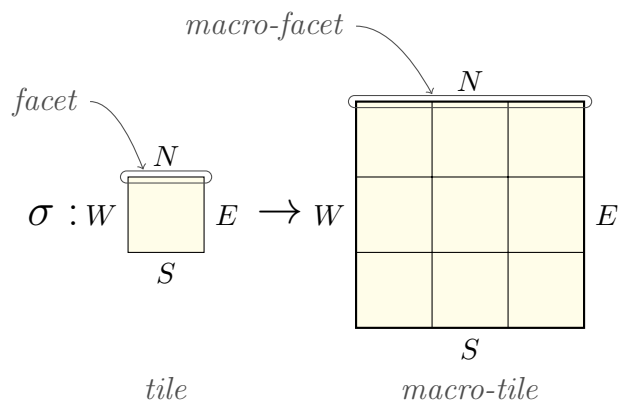
\begin{figure}[!ht]
\centering
	\begin{blingbling}
	\begin{tikzpicture}
	\begin{scope}
	\filldraw[fill=ftile] (0,0) rectangle ++(1,1);
	\path (0,0.5) node[left] {\small $W$}
	      (1,0.5) node[right] {\small $E$}
	      (0.5,1) node[above] {\small $N$}
	      (0.5,0) node[below] {\small $S$};
	\draw[black!70,rounded corners=0.8mm] (0,1) ++(-0.1,-0.08) rectangle ++(1.2,0.16);
	\draw[->,black!70] (-0.5,2) node[left] {\emph{facet}} parabola (0.2,1.1);
	\end{scope}
	\path (-0.5,0.5) node[left] {\Large $\sigma :$}
	      (2,0.5) node {\Large $\rightarrow$};
	\begin{scope}[shift={(3,-1)}]
	\filldraw[fill=ftile,thick] (0,0) rectangle ++(3,3);
	\draw (0,0) grid ++(3,3);
	\path (0,1.5) node[left] {\small $W$}
	      (3,1.5) node[right] {\small $E$}
	      (1.5,3) node[above] {\small $N$}
	      (1.5,0) node[below] {\small $S$};
	\draw[black!70,rounded corners=0.8mm] (0,3) ++(-0.1,-0.08) rectangle ++(3.2,0.16);
	\draw[->,black!70] (0,4) node[left] {\emph{macro-facet}} parabola (1,3.1);
	\end{scope}
	\path (0.5,-1.5) node[below,black!70] {\emph{tile}}
		  (4.5,-1.5) node[below,black!70] {\emph{macro-tile}};
	\end{tikzpicture}%
	\end{blingbling}
\caption{a simple substitution}
\end{figure}

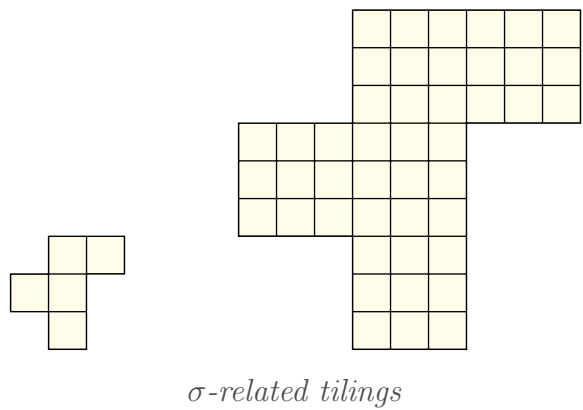
\begin{figure}[!ht]
\centering
	\begin{blingbling}
	\begin{tikzpicture}[line width=0.5pt,scale=0.5]
	\begin{scope}
	\filldraw[fill=ftile] (0,0) -- ++(1,0) -- ++(0,2) -- ++(1,0) -- ++(0,1) -- ++(-2,0) -- ++(0,-1) -- ++(-1,0) -- ++(0,-1) -- ++(1,0) -- cycle;
	\clip (0,0) -- ++(1,0) -- ++(0,2) -- ++(1,0) -- ++(0,1) -- ++(-2,0) -- ++(0,-1) -- ++(-1,0) -- ++(0,-1) -- ++(1,0) -- cycle;
	\draw (-1,0) grid ++(3,3);
	\end{scope}	
	\begin{scope}[shift={(8,0)}]
	\filldraw[fill=ftile,scale=3] (0,0) -- ++(1,0) -- ++(0,2) -- ++(1,0) -- ++(0,1) -- ++(-2,0) -- ++(0,-1) -- ++(-1,0) -- ++(0,-1) -- ++(1,0) -- cycle;
	\clip[scale=3] (0,0) -- ++(1,0) -- ++(0,2) -- ++(1,0) -- ++(0,1) -- ++(-2,0) -- ++(0,-1) -- ++(-1,0) -- ++(0,-1) -- ++(1,0) -- cycle;
	\draw (-3,0) grid ++(9,9);
	\end{scope}
	\path (6.5,-0.5) node[below,black!70] {\emph{$\sigma$-related tilings}};
	\end{tikzpicture}%
	\end{blingbling}
\caption{sample $\sigma$-related tilings}
\end{figure}

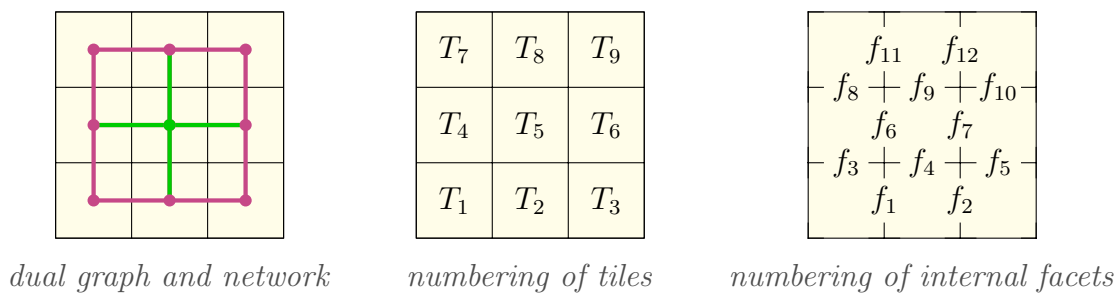
\begin{figure}[!ht]
\centering
	\begin{blingbling}
	\begin{tikzpicture}
	\filldraw[fill=ftile] (0,0) rectangle ++(3,3);
	\draw (0,0) grid ++(3,3);
	\draw[shift={(0.5,0.5)},ultra thick,dual] (0,0) grid ++(2,2);
	\draw[ultra thick,ndual] (0.5,1.5) -- ++(2,0);
	\draw[ultra thick,ndual] (1.5,0.5) -- ++(0,2);
	\foreach \x in {0,1,2}
	\foreach \y in {0,1,2}
	\fill[dual] (\x+0.5,\y+0.5) circle (0.08);
	\fill[ndual] (1.5,1.5) circle (0.08);
	\path (1.5,-0.2) node[below,black!70] {\emph{dual graph and network}};
	\end{tikzpicture}%
	\end{blingbling}\hfill
	\begin{blingbling}
	\begin{tikzpicture}
	\filldraw[fill=ftile] (0,0) rectangle ++(3,3);
	\draw (0,0) grid ++(3,3);
	\path[shift={(0.5,0.5)}] (0,0) node {$T_1$} (1,0) node {$T_2$} (2,0) node {$T_3$} (0,1) node {$T_4$} (1,1) node {$T_5$} (2,1) node {$T_6$} (0,2) node {$T_7$} (1,2) node {$T_8$} (2,2) node {$T_9$};
	\path (1.5,-0.2) node[below,black!70] {\emph{numbering of tiles}};
	\end{tikzpicture}%
	\end{blingbling}\hfill
	\begin{blingbling}
	\begin{tikzpicture}
	\filldraw[fill=ftile] (0,0) rectangle ++(3,3);
	\draw[coolgrid] (0,0) grid ++(3,3);
	\path[shift={(0,0.5)}] (1,0) node {$f_1$} (2,0) node {$f_2$} (1,1) node {$f_6$} (2,1) node {$f_7$} (1,2) node {$f_{11}$} (2,2) node {$f_{12}$};
	\path[shift={(0.5,0)}] (0,1) node {$f_3$} (1,1) node {$f_4$} (2,1) node {$f_5$} (0,2) node {$f_8$} (1,2) node {$f_9$} (2,2) node {$f_{10}$};
	\path (1.5,-0.2) node[below,black!70] {\emph{numbering of internal facets}};
	\end{tikzpicture}%
	\end{blingbling}
\caption{chosen network and numberings}
\end{figure}

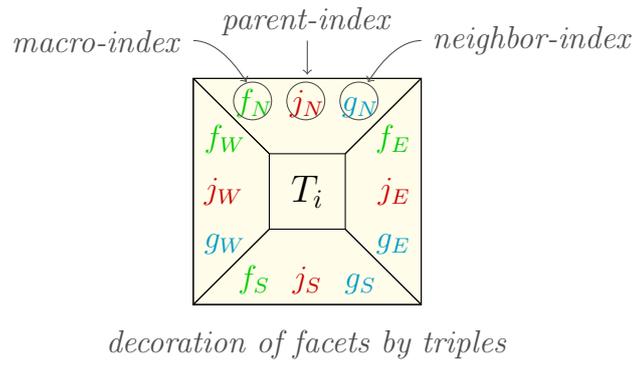
\begin{figure}[!ht]
\centering
	\begin{blingbling}
	\begin{tikzpicture}
	\decotile{f={$f_S$},j={$j_S$},g={$g_S$}}{f={$f_N$},j={$j_N$},g={$g_N$}}{f={$f_W$},j={$j_W$},g={$g_W$}}{f={$f_E$},j={$j_E$},g={$g_E$}}{$T_i$}{0,0}
	\draw[black!70] (0.78,2.7) circle (0.25);
	\draw[->,black!70] (0,3.5) node[left] {\emph{macro-index}} parabola (0.7,2.95);
	\draw[black!70] (1.48,2.7) circle (0.25);
	\draw[->,black!70] (1.5,3.5) node[above=-2pt] {\emph{parent-index}} -- (1.5,3.05);
	\draw[black!70] (2.18,2.7) circle (0.25);
	\draw[->,black!70] (3,3.5) node[right] {\emph{neighbor-index}} parabola (2.3,2.95);
	\path (1.5,-0.2) node[below,black!70] {\emph{decoration of facets by triples}};
	\end{tikzpicture}%
	\end{blingbling}
\caption{decoration of facets}
\end{figure}

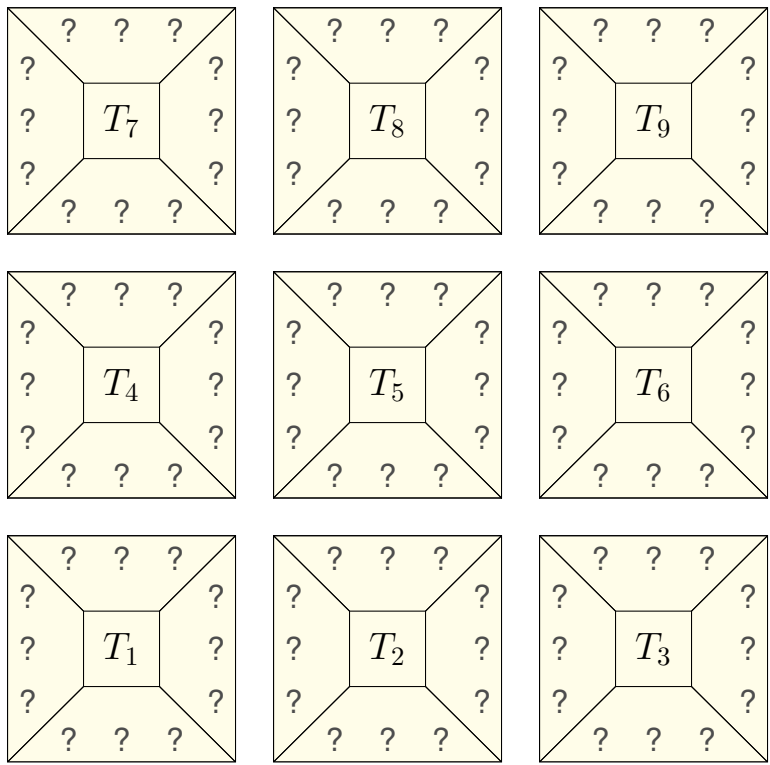
\begin{figure}[!ht]
\centering
	\begin{blingbling}
	\begin{tikzpicture}
	\decotile{}{}{}{}{$T_1$}{0,0}
	\decotile{}{}{}{}{$T_2$}{3.5,0}
	\decotile{}{}{}{}{$T_3$}{7,0}
	\decotile{}{}{}{}{$T_4$}{0,3.5}
	\decotile{}{}{}{}{$T_5$}{3.5,3.5}
	\decotile{}{}{}{}{$T_6$}{7,3.5}
	\decotile{}{}{}{}{$T_7$}{0,7}
	\decotile{}{}{}{}{$T_8$}{3.5,7}
	\decotile{}{}{}{}{$T_9$}{7,7}
	\end{tikzpicture}%
	\end{blingbling}
\caption{initial decoration scheme}
\end{figure}

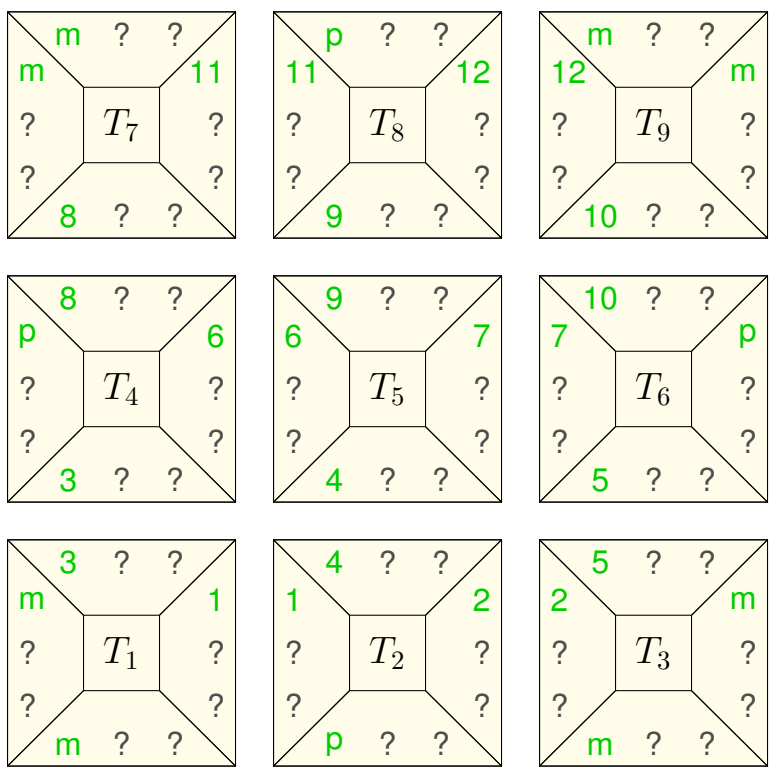
\begin{figure}[!ht]
\centering
	\begin{blingbling}
	\begin{tikzpicture}
	\decotile{f={m}}{f={3}}{f={m}}{f={1}}{$T_1$}{0,0}
	\decotile{f={p}}{f={4}}{f={1}}{f={2}}{$T_2$}{3.5,0}
	\decotile{f={m}}{f={5}}{f={2}}{f={m}}{$T_3$}{7,0}
	\decotile{f={3}}{f={8}}{f={p}}{f={6}}{$T_4$}{0,3.5}
	\decotile{f={4}}{f={9}}{f={6}}{f={7}}{$T_5$}{3.5,3.5}
	\decotile{f={5}}{f={10}}{f={7}}{f={p}}{$T_6$}{7,3.5}
	\decotile{f={8}}{f={m}}{f={m}}{f={11}}{$T_7$}{0,7}
	\decotile{f={9}}{f={p}}{f={11}}{f={12}}{$T_8$}{3.5,7}
	\decotile{f={10}}{f={m}}{f={12}}{f={m}}{$T_9$}{7,7}
	\end{tikzpicture}%
	\end{blingbling}
\caption{after step 1: macro-indices are fixed}
\end{figure}

\begin{figure}[!ht]
\centering
	\begin{blingbling}
	\begin{tikzpicture}
	\decotile{f={m},j={0}}{f={3},j={$j$}}{f={m},j={0}}{f={1},j={$j$}}{$T_1$}{0,0}
	\decotile{f={p}}{f={4}}{f={1},j={$j$}}{f={2},j={$j$}}{$T_2$}{3.5,0}
	\decotile{f={m},j={0}}{f={5},j={$j$}}{f={2},j={$j$}}{f={m},j={0}}{$T_3$}{7,0}
	\decotile{f={3},j={$j$}}{f={8},j={$j$}}{f={p}}{f={6}}{$T_4$}{0,3.5}
	\decotile{f={4}}{f={9}}{f={6}}{f={7}}{$T_5$}{3.5,3.5}
	\decotile{f={5},j={$j$}}{f={10},j={$j$}}{f={7}}{f={p}}{$T_6$}{7,3.5}
	\decotile{f={8},j={$j$}}{f={m},j={0}}{f={m},j={0}}{f={11},j={$j$}}{$T_7$}{0,7}
	\decotile{f={9}}{f={p}}{f={11},j={$j$}}{f={12},j={$j$}}{$T_8$}{3.5,7}
	\decotile{f={10},j={$j$}}{f={m},j={0}}{f={12},j={$j$}}{f={m},j={0}}{$T_9$}{7,7}
	\end{tikzpicture}%
	\end{blingbling}
\caption{after step 2: parent-indices are fixed outside network}
\end{figure}
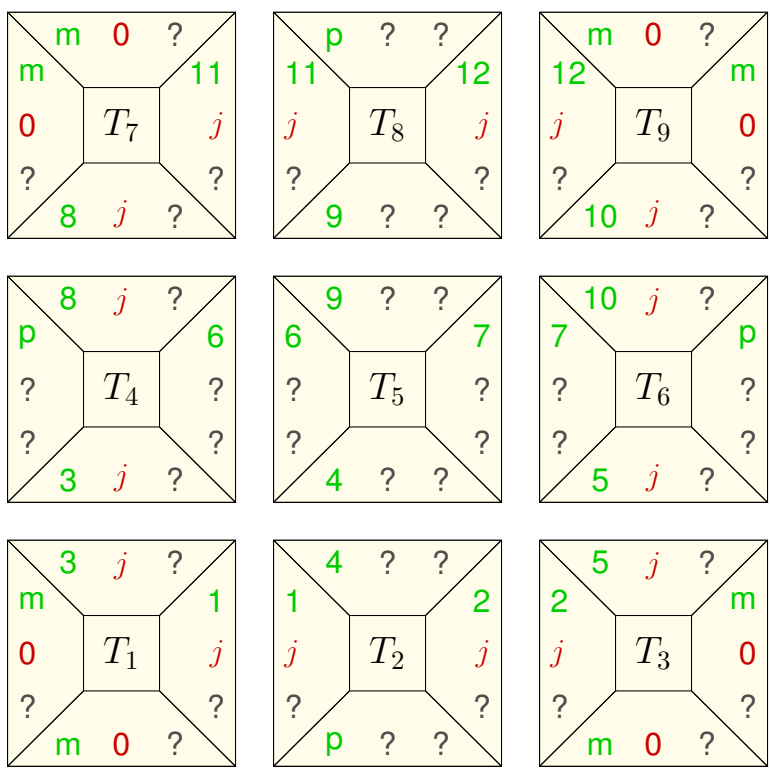

\begin{figure}[!ht]
\centering
	\begin{blingbling}
	\begin{tikzpicture}
	\decotile{f={m},j={0},g={$s$}}{f={3},j={$j$},g={3}}{f={m},j={0},g={$w$}}{f={1},j={$j$},g={1}}{$T_1$}{0,0}
	\decotile{f={p}}{f={4}}{f={1},j={$j$},g={1}}{f={2},j={$j$},g={2}}{$T_2$}{3.5,0}
	\decotile{f={m},j={0},g={$s$}}{f={5},j={$j$},g={5}}{f={2},j={$j$},g={2}}{f={m},j={0},g={$e$}}{$T_3$}{7,0}
	\decotile{f={3},j={$j$},g={3}}{f={8},j={$j$},g={8}}{f={p}}{f={6}}{$T_4$}{0,3.5}
	\decotile{f={4}}{f={9}}{f={6}}{f={7}}{$T_5$}{3.5,3.5}
	\decotile{f={5},j={$j$},g={5}}{f={10},j={$j$},g={10}}{f={7}}{f={p}}{$T_6$}{7,3.5}
	\decotile{f={8},j={$j$},g={8}}{f={m},j={0},g={$n$}}{f={m},j={0},g={$w$}}{f={11},j={$j$},g={11}}{$T_7$}{0,7}
	\decotile{f={9}}{f={p}}{f={11},j={$j$},g={11}}{f={12},j={$j$},g={12}}{$T_8$}{3.5,7}
	\decotile{f={10},j={$j$},g={10}}{f={m},j={0},g={$n$}}{f={12},j={$j$},g={12}}{f={m},j={0},g={$e$}}{$T_9$}{7,7}
	\path (5,-0.2) node[below,black!70] {\emph{where $n=N_\sigma(j,N)$, $s=N_\sigma(j,S)$, $w=N_\sigma(j,W)$ and $e=N_\sigma(j,E)$.}};
	\end{tikzpicture}%
	\end{blingbling}
\caption{after step 3: all decorations fixed outside network}
\end{figure}

\begin{figure}[!ht]
\centering
	\begin{blingbling}
	\begin{tikzpicture}
	\decotile{f={3},j={1},g={3}}{f={8},j={1},g={8}}{f={p},j={0},g={$w$}}{f={6},j={0},g={$w$}}{$T_4$}{0,0}
	\decotile{f={3},j={2},g={3}}{f={8},j={2},g={8}}{f={p},g={1},j={$j$}}{f={6},g={1},j={$j$}}{$T_4$}{3.5,0}
	\decotile{f={3},j={3},g={3}}{f={8},j={3},g={8}}{f={p},g={2},j={$j$}}{f={6},g={2},j={$j$}}{$T_4$}{7,0}
	\decotile{f={3},j={4},g={3}}{f={8},j={4},g={8}}{f={p},j={$\alpha$},g={$\beta$}}{f={6},j={$\alpha$},g={$\beta$}}{$T_4$}{0,3.5}
	\decotile{f={3},j={5},g={3}}{f={8},j={5},g={8}}{f={p},j={$\alpha$},g={$\beta$}}{f={6},j={$\alpha$},g={$\beta$}}{$T_4$}{3.5,3.5}
	\decotile{f={3},j={6},g={3}}{f={8},j={6},g={8}}{f={p},j={$\alpha$},g={$\beta$}}{f={6},j={$\alpha$},g={$\beta$}}{$T_4$}{7,3.5}
	\decotile{f={3},j={7},g={3}}{f={8},j={7},g={8}}{f={p},j={0},g={$w$}}{f={6},j={0},g={$w$}}{$T_4$}{0,7}
	\decotile{f={3},j={8},g={3}}{f={8},j={8},g={8}}{f={p},g={11},j={$j$}}{f={6},g={11},j={$j$}}{$T_4$}{3.5,7}
	\decotile{f={3},j={9},g={3}}{f={8},j={9},g={8}}{f={p},g={12},j={$j$}}{f={6},g={12},j={$j$}}{$T_4$}{7,7}
	\path (5,-0.2) node[below,black!70] {\begin{minipage}{12cm}
	\emph{where $w=N_\sigma(j,W)$, for any $j$ and any horizontal pair $(\alpha,\beta)$ of parent/neighbor indices}
	\end{minipage}};
	\end{tikzpicture}
	\end{blingbling}
\caption{after step 4: all decorations fixed but for central tiles}
\end{figure}

\begin{figure}[!ht]
\centering
	\begin{blingbling}
	\begin{tikzpicture}
	\decotile{f={4},j={0},g={$s$}}{f={9},j={$j$},g={3}}{f={6},j={0},g={$w$}}{f={7},j={$j$},g={1}}{$T_5$}{0,0}
	\decotile{f={4},j={$\alpha^2_j$},g={$\beta^2_j$}}{f={9},j={$\alpha^2_j$},g={$\beta^2_j$}}{f={6},j={$j$},g={1}}{f={7},j={$j$},g={2}}{$T_5$}{3.5,0}
	\decotile{f={4},j={0},g={$s$}}{f={9},j={$j$},g={5}}{f={6},j={$j$},g={2}}{f={7},j={0},g={$e$}}{$T_5$}{7,0}
	\decotile{f={4},j={$j$},g={3}}{f={9},j={$j$},g={8}}{f={6},j={$\alpha^4_j$},g={$\beta^4_j$}}{f={7},j={$\alpha^4_j$},g={$\beta^4_j$}}{$T_5$}{0,3.5}
	\decotile{f={4},j={$j$},g={5}}{f={9},j={$j$},g={10}}{f={6},j={$\alpha^6_j$},g={$\beta^6_j$}}{f={7},j={$\alpha^6_j$},g={$\beta^6_j$}}{$T_5$}{7,3.5}
	\decotile{f={4},j={$j$},g={8}}{f={9},j={0},g={$n$}}{f={6},j={0},g={$w$}}{f={7},j={$j$},g={11}}{$T_5$}{0,7}
	\decotile{f={4},j={$\alpha^8_j$},g={$\beta^8_j$}}{f={9},j={$\alpha^8_j$},g={$\beta^8_j$}}{f={6},j={$j$},g={11}}{f={7},j={$j$},g={12}}{$T_5$}{3.5,7}
	\decotile{f={4},j={$j$},g={10}}{f={9},j={0},g={$n$}}{f={6},j={$j$},g={12}}{f={7},j={0},g={$e$}}{$T_5$}{7,7}
	\path (5,-0.2) node[below,black!70] {\begin{minipage}{12cm}
	\emph{where, for any $j$, $n=N_\sigma(j,N)$, $s=N_\sigma(j,S)$, $w=N_\sigma(j,W)$, $e=N_\sigma(j,E)$, $(\alpha^k_j,\beta^k_j)$ are valid parent/neighbor indices for a tile $T_k$ with parent-index $j$}
	\end{minipage}};
	\end{tikzpicture}%
	\end{blingbling}
\caption{after step 5: all decorations are fixed}
\end{figure}
\end{document}